\documentclass[11pt]{amsart}
\usepackage{amsmath,amssymb,amsthm,mathtools, enumitem}
\usepackage{hyperref}
\hypersetup{hidelinks}
\usepackage{cite}

\usepackage{centernot}

\DeclareMathOperator{\re}{Re}

\newtheorem{theorem}{Theorem}[section]
\newtheorem{lemma}[theorem]{Lemma}
\newtheorem{proposition}[theorem]{Proposition}
\newtheorem{corollary}[theorem]{Corollary}

\theoremstyle{remark}
\newtheorem{remark}[theorem]{Remark}

\theoremstyle{definition}
\newtheorem{definition}{Definition}

\title{Weak stability and binary convex combinations of slices}

\author{Rainis Haller}
\address{Institute of Mathematics and Statistics, University of Tartu, Estonia}
\email{rainis.haller@ut.ee}

\subjclass[2020]{Primary 46B20; Secondary 46B04}
\keywords{convex combinations of slices, relatively weakly open subset, weak stability, property CWO, strong diameter two property}

\thanks{This work was supported by the Estonian Research Council grant PRG1901.}
\date{}

\begin{document}

\begin{abstract}
    We study weak openness and related geometric properties of convex combinations of slices of the unit balls of Banach spaces. We prove that, for every $m\geq 2$, properties $\mathrm{P1}^{(m)}$ and $\mathrm{CWO}^{(m)}$ are equivalent, and likewise for their closure variants. Consequently, both $\mathrm{P1}$ and $\overline{\mathrm{P1}}$ are determined by convex combinations of two slices; in particular P1 and CWO are equivalent. We construct infinite-dimensional real Banach spaces with $\mathrm{P2}^{(2)}$ but without P2, and with $\mathrm{P3}^{(2)}$ but without P3. In the latter space, every convex combination of two slices contains two points at distance 2, although the space fails the strong diameter two property. Finally, we give an equivalent norm on the real space $c_0$ for which  $\overline{\mathrm{P1}}$ holds, but P1 fails. These answers several questions raised in the literature.
\end{abstract}


\maketitle
\section{Introduction}
Let $X$ be a real or complex Banach space. We denote its closed unit ball and unit sphere by $B_X$ and $S_X$, respectively. A slice of $B_X$ is a set of the form $S(B_X,x^*,\alpha)=\{x\in B_X : \re x^*(x)>1-\alpha\}$ with $x^*\in S_{X^*}$ and $\alpha>0$.

\begin{definition}
A Banach space $X$ has
\begin{enumerate}[label=\textup{(\roman*)}]
    \item property P1 if every finite convex combination of slices of $B_X$ is relatively weakly open in $B_X$;
    \item property CWO if every finite convex combination of non-empty relatively weakly open subsets of $B_X$ is relatively weakly open;
    \item property $\overline{\text{P1}}$ if, whenever $C$ is a finite convex combination of slices of $B_X$, and $x\in C$, there exists a relatively weakly open $W\subset B_X$ such that $x\in W\subset \overline C^{\|\cdot\|}$;
    \item property $\overline{\textup{CWO}}$ if the preceding condition holds for every finite convex combination $C$ of non-empty relatively weakly open subsets of $B_X$;
    \item property P2 if every finite convex combination of slices has non-empty relative weak interior in $B_X$;
    \item property P3 if every finite convex combination of slices intersects $S_X$.
\end{enumerate}
For $m\geq 2$, the superscript $(m)$ means that only convex combinations of at most $m$ sets are considered.
\end{definition}

By Bourgain's lemma, every non-empty relatively weakly open subset of $B_X$ contains a convex combination of slices of $B_X$ \cite[Lemma~II.1]{MR912637}. Since slices are relatively weakly open, CWO implies P1, and clearly, P1 implies $\overline{\mathrm{P1}}$. If $X$ is infinite-dimensional, every non-empty relatively weakly open subset of $B_X$ intersects $S_X$. Hence, in an infinite-dimensional space,
\[\textup{P1}\implies \textup{P2}\implies \textup{P3}\qquad (\operatorname{dim}X=\infty).\]

The properties P1, P2, and P3 were formulated in \cite[Section~3]{MR3834668}, and the notation was introduced in \cite{MR4102873}. The same conditions are denoted in \cite{MR3994868} by (W1), (W2), and (CS), respectively. Property CWO was introduced in \cite{MR4034749}; it is also referred to as weak stability of the unit ball in \cite{MR4422399}. The norm-closure variant $\overline{\text{P1}}$ was introduced in \cite{MR4116184}.

Our first result shows that:
\[
\textup{P1}^{(m)}\iff \textup{CWO}^{(m)}\qquad\text{and}\qquad \overline{\textup{P1}}^{(m)}\iff\overline{\textup{CWO}}^{(m)}.
\]
It follows that 
\[
\textup{P1}^{(2)}\implies \textup{CWO}^{(2)}\implies \textup{CWO}\implies \textup{P1}.
\]
The equivalence $\textup{P1}=\textup{CWO}$ answers a question of L\'opez-P\'erez and Medina \cite[after Corollary~2.4]{MR4422399}.

In contrast, we construct real Banach spaces, showing that the analogous binary assertions fail at the next level:
\[
\textup{P2}^{(2)}\centernot\implies \textup{P2}\qquad\text{and}\qquad \textup{P3}^{(2)}\centernot\implies \textup{P3}.
\]
In the latter example, every convex combination of two slices contains two points at distance $2$, although the space fails the strong diameter two property. Recall that a space has the strong diameter two property (SD2P) if every finite convex combination of slices of its unit ball has diameter $2$. In the real case, P3 is equivalent to requiring that every such combination contain two points at distance $2$ \cite[Theorem~3.4]{MR3994868}.

Finally, we construct an equivalent norm on the real space $c_0$ for which $\overline{\mathrm{P1}}$ holds but P1 fails. The mean of two opposite slices already witnesses the failure. This separates, for general Banach spaces, two conditions whose relationship was left open in \cite{MR4116184} (see also \cite[p.~7]{MR4422399}).

\section{P1, CWO, and their binary versions}

We start with an elementary lemma.

\begin{lemma}\label{lem:2.1}
    Let $A\subset \mathbb R^N$ be non-empty, convex, and bounded, and put 
    \[\Delta_N=\{c=(c_1,\dotsc,c_N)\in [0,1]^N : \sum_{i=1}^N c_i=1\}.\] 
    If $\sup_{a\in A} c\cdot a>0$ for every $c\in \Delta_N$, then $A\cap (0,\infty)^N\neq \varnothing$.
\end{lemma}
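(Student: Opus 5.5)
We argue by contradiction, using a separation argument combined with compactness of the simplex $\Delta_N$. Suppose $A\cap(0,\infty)^N=\varnothing$. Then $A$ and the open convex cone $(0,\infty)^N$ are disjoint non-empty convex subsets of $\mathbb R^N$. The cone is open, so the finite-dimensional Hahn--Banach (separation) theorem gives a non-zero vector $c\in\mathbb R^N$ and a real $t$ such that
\[
c\cdot a\le t\le c\cdot y\qquad\text{for all } a\in A,\ y\in(0,\infty)^N.
\]

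The next step is to pin down the sign of $c$ and of $t$.

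\emph{Sign of $t$.} Letting $y\to 0$ inside $(0,\infty)^N$ gives $t\le 0$.

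\emph{Sign of $c$.} If some coordinate $c_j$ were negative, take $y=e_j\lambda+\varepsilon(1,\dots,1)$ and let $\lambda\to\infty$. Then $c\cdot y\to-\infty$, contradicting $c\cdot y\ge t$. Hence $c_j\ge 0$ for every $j$. Since $c\neq 0$, we have $s:=\sum_i c_i>0$.

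Now normalise: $c':=c/s\in\Delta_N$. For every $a\in A$,
\[
c'\cdot a\le t/s\le 0,
\]
so $\sup_{a\in A}c'\cdot a\le 0$. This contradicts the hypothesis that $\sup_{a\in A}c\cdot a>0$ for every $c\in\Delta_N$, and so $A\cap(0,\infty)^N\neq\varnothing$.

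The only delicate point is justifying the separation of two disjoint convex sets when neither is assumed closed. In finite dimensions this is standard whenever one of the sets is open, which is the case here. Boundedness of $A$ is not needed in this route; it merely ensures the suprema are finite.

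An alternative route would use a minimax argument on $\Delta_N\times A$, but the direct separation argument above is cleaner, and I expect no real obstacle.
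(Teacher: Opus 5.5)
Your proof is correct and is essentially the paper's argument: you separate $A$ from the open orthant $(0,\infty)^N$, note that the separating vector must have non-negative coordinates and that $\inf_{y\in(0,\infty)^N} c\cdot y=0$, and normalise into $\Delta_N$ to reach a contradiction. Two small remarks: the compactness of $\Delta_N$ announced in your first sentence is never actually used, and you are right that boundedness of $A$ is not needed.
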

Here $c\cdot a$ denotes the usual scalar product on $\mathbb R^N$.

\begin{proof}
    Suppose that $A$ and $(0,\infty)^N$ were disjoint. The separation theorem gives a non-zero $d\in\mathbb R^N$ such that
    \[
    \sup_{a\in A}d\cdot a\leq \inf_{b\in (0,\infty)^N} d\cdot b.
    \]
    The infimum on the right is finite only if every coordinate of $d$ is non-negative, and then this infimum is equal to $0$. Normalising $d$ by the sum of its coordinates contradicts the assumption.
\end{proof}

\begin{theorem}\label{thm:thm2.1}
    For every Banach space $X$ and every integer $m\geq 2$, 
    \[\textup{P1}^{(m)}\iff \textup{CWO}^{(m)}\qquad\text{and}\qquad \overline{\textup{P1}}^{(m)}\iff \overline{\textup{CWO}}^{(m)}.\]
\end{theorem}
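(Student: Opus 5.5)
The implications $\textup{CWO}^{(m)}\Rightarrow\textup{P1}^{(m)}$ and $\overline{\textup{CWO}}^{(m)}\Rightarrow\overline{\textup{P1}}^{(m)}$ are immediate, because slices are non-empty relatively weakly open sets. For the converse, let $C=\sum_{i=1}^k\lambda_iW_i$ with $k\le m$ and each $W_i$ non-empty and relatively weakly open in $B_X$. Fix $x=\sum_i\lambda_iw_i$ with $w_i\in W_i$.

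\emph{Step 1: replace each $W_i$ by a finite intersection of slices.} Each $W_i$ contains a basic neighbourhood of $w_i$. Splitting every condition $|f(y)-f(w_i)|<\varepsilon$ into real half-space conditions (using $\pm\re f$ and $\pm\re(if)$ in the complex case), we obtain functionals $g_{ij}\in X^*$ and reals $t_{ij}$, $j=1,\dots,N$, with the following property: $V_i=\{y\in B_X:\re g_{ij}(y)>t_{ij}\ \forall j\}$ satisfies $w_i\in V_i\subset W_i$. After shrinking we may also assume a margin: $\re g_{ij}(w_i)>t_{ij}+\delta$ for some $\delta>0$. It then suffices to find a relatively weakly open $U\ni x$ with $U\subset\sum_i\lambda_iV_i$, or $U\subset\overline{\sum_i\lambda_iV_i}$ in the closure version.

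\emph{Step 2: reduce membership to Lemma~\ref{lem:2.1}.} For $y\in B_X$, let $A_y\subset\mathbb R^{kN}$ be the set of vectors $\bigl(\re g_{ij}(y_i)-t_{ij}\bigr)_{i,j}$, where $(y_1,\dots,y_k)$ runs over all decompositions $y=\sum_i\lambda_iy_i$ with $y_i\in B_X$. This set is convex and bounded, and it is non-empty whenever $y\in B_X$. Moreover, $y\in\sum\lambda_iV_i$ if and only if $A_y\cap(0,\infty)^{kN}\neq\varnothing$. By Lemma~\ref{lem:2.1} it therefore suffices to show $\sup_{a\in A_y}c\cdot a>0$ for all $c\in\Delta_{kN}$. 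For a fixed $c$, put $h_i=\sum_jc_{ij}g_{ij}$ and $s_i=\sum_jc_{ij}t_{ij}$. Then $T_i(c)=\{y\in B_X:\re h_i(y)>s_i+\delta/2\}$ contains $w_i$, so it is either all of $B_X$ or (after normalising $h_i$) a slice of $B_X$; the case $h_i=0$ is harmless. If $y=\sum\lambda_iy_i$ with $y_i\in T_i(c)$, then $c\cdot a\ge\sum_i\delta/2\cdot\sum_jc_{ij}=\delta/2>0$. By $\textup{P1}^{(m)}$, the set $U_c=\sum_i\lambda_iT_i(c)$ is a relatively weakly open neighbourhood of $x$ on which $\sup c\cdot a\ge\delta/2$.

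\emph{Step 3: uniformity in $c$, which I expect to be the main obstacle.} Infinitely many $c$ occur, so we cannot simply intersect all the sets $U_c$. I will use compactness of $\Delta_{kN}$ together with the margin. Since all $g_{ij}$ and $y_i$ are bounded, the map $c\mapsto c\cdot a$ is Lipschitz in $c$ uniformly over $a\in A_y$ and $y\in B_X$, with constant $L=\max\|g_{ij}\|+\max|t_{ij}|$. Hence for $|c-c'|<\delta/(4L)$ we get $\sup c'\cdot a>\delta/4>0$ on $U_c$. Choose a finite $\delta/(4L)$-net $c^1,\dots,c^r$ of $\Delta_{kN}$ and set $U=\bigcap_\ell U_{c^\ell}$. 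This $U$ is relatively weakly open and contains $x$, and every $y\in U$ satisfies the hypothesis of Lemma~\ref{lem:2.1}. Therefore $U\subset\sum\lambda_iV_i\subset C$, which proves $\textup{CWO}^{(m)}$.

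\emph{Closure version.} Under $\overline{\textup{P1}}^{(m)}$ we only have $x\in U_c\subset\overline{\sum\lambda_iT_i(c)}$. Given $y\in U=\bigcap_\ell U_{c^\ell}$ and $\eta>0$, every point of $\sum\lambda_iT_i(c^\ell)$ within $\eta$ of $y$ yields a decomposition of that point with $c^\ell\cdot a\ge\delta/2$. To stay on a single point, I will apply Lemma~\ref{lem:2.1} to the enlarged convex set $A_y^\eta$ of vectors coming from decompositions of points of $B_X$ within distance $\eta$ of $y$; this set is convex because the ball $B(y,\eta)\cap B_X$ is convex. The Lipschitz net argument then gives a point within $\eta$ of $y$ lying in $\sum\lambda_iV_i$. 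Letting $\eta\to0$ gives $U\subset\overline{C}$, which is $\overline{\textup{CWO}}^{(m)}$.
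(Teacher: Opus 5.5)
Your argument is the paper's proof: basic neighbourhoods cut out by finitely many real half-space conditions, the convex bounded set of coordinate vectors over all decompositions $y=\sum_i\lambda_iy_i$, Lemma~\ref{lem:2.1}, one aggregated slice per index $i$ for each weight vector $c$, a finite net of the simplex with a uniform bound on $\|a\|_\infty$, and the enlarged sets $A_y^\eta$ in the closure case. One step fails as written, however, and needs the correction below.

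\textbf{The slip.} The set $T_i(c)=\{y\in B_X:\re h_i(y)>s_i+\delta/2\}$ need not contain $w_i$. You only know
$\re h_i(w_i)-s_i=\sum_jc_{ij}(\re g_{ij}(w_i)-t_{ij})\geq\delta\sum_jc_{ij}$, and $\sum_jc_{ij}$ can be much smaller than $1/2$.

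\begin{itemize}
\item If $c_{i1}=\dots=c_{iN}=0$, then $h_i=0$ and $s_i=0$, so $T_i(c)=\varnothing$. This case is therefore not harmless.
\item No choice of $\delta$ repairs this, so $x$ need not lie in $U_c$.
\end{itemize}

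\textbf{The fix.} Your next line, with the factor $\sum_jc_{ij}$, shows what you meant. The cleanest repair measures the threshold from the value at $w_i$; this is what the paper does after normalising so that $r_{ij}(u_i)=1$. Put
\begin{equation*}
T_i(c)=\{y\in B_X:\re h_i(y)>\re h_i(w_i)-\delta/(2k)\}.
\end{equation*}
\begin{itemize}
\item This set always contains $w_i$.
\item It equals $B_X$ when $h_i=0$ and is a slice otherwise.
\item For $y_i\in T_i(c)$ it gives $c\cdot a>\sum_i(\re h_i(w_i)-s_i)-\delta/2\geq\delta-\delta/2=\delta/2$.
\end{itemize}
Alternatively, keep the threshold $s_i+\tfrac{\delta}{2}\sum_jc_{ij}$ and set $T_i(c)=B_X$ whenever $c_{i1}=\dots=c_{iN}=0$.

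Also take the net in the $\ell_1$-norm, so that your constant $L$ is the correct Lipschitz bound for $c\mapsto c\cdot a$. With these corrections the rest of your argument goes through exactly as in the paper, including the closure case.
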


\begin{proof}
    The implications $\textup{CWO}^{(m)}\implies \textup{P1}^{(m)}$ and $\overline{\textup{CWO}}^{(m)}\implies \overline{\textup{P1}}^{(m)}$ are immediate. 
    
    We prove the converse implications $\textup{P1}^{(m)}\implies \textup{CWO}^{(m)}$ and $\overline{\textup{P1}}^{(m)}\implies \overline{\textup{CWO}}^{(m)}$ simultaneously; we refer to them as the exact and closure cases, respectively. The assertion is trivial for $X=\{0\}$, so assume that $X\neq\{0\}$. 
    
    Fix $n\leq m$, non-empty relatively weakly open sets $U_i\subset B_X$, $\lambda_i>0$ with $\sum_{i=1}^n\lambda_i=1$, and a point $u=\sum_{i=1}^n\lambda_i u_i$ with $u_i\in U_i$.
    
    For every $i$, choose a basic neighbourhood 
    \[u_i\in U_i^0=\{x\in B_X : \re f_{ij}(x)>\alpha_{ij},\ j=1,\dotsc,m_i\}\subset U_i.\]
    Let 
    \[
    \gamma_{ij}=\re f_{ij}(u_i)-\alpha_{ij}>0\qquad\text{and}\qquad r_{ij}(x)=\frac{\re f_{ij}(x)-\alpha_{ij}}{\gamma_{ij}}.
    \]
    Thus $r_{ij}(u_i)=1$, and an element $x\in B_X$ belongs to $U_i^0$ precisely when all the numbers $r_{ij}(x)$ are positive.

    Let $N=\sum_{i=1}^n m_i$. For $c=(c_{ij})\in \Delta_N$, define
    \[
    g_i^c=\sum_{j=1}^{m_i}\frac{c_{ij}}{\gamma_{ij}}f_{ij}
    \qquad\text{and}\qquad
    S_i^c=\{x\in B_X : \re g_i^c(x)>\re g_i^c(u_i)-\tfrac{1}{2n}\}.
    \]
    Each $S_i^c$ is a slice containing $u_i$. Consider 
    \[V_c=\sum_{i=1}^n\lambda_i S_i^c.\]
    In the exact case, $V_c$ is relatively open by $\textup{P1}^{(m)}$. In the closure case, $\overline{\textup{P1}}^{(m)}$ gives a relatively weakly open set $W_c$ such that $u\in W_c\subset \overline{V_c}^{\|\cdot\|\}}$.

    Choose $M<\infty$ such that, for every $a\in B_X$, $i=1,\dotsc,n$, and $j=1,\dotsc,m_i$, $|r_{ij}(a)|\leq M$. Let $0<\delta<1/(2M)$ and let $D$ be a finite $\delta$-net of $\Delta_N$ in the $\ell_1$-norm. Then $W=\bigcap_{d\in D}V_d$, in the exact case, and $W=\bigcap_{d\in D}W_d$, in the closure case, is a relatively weakly open neighbourhood of $u$ in $B_X$. Fix $x\in W$. In the exact case, let
    \[A=\big\{(r_{ij}(x_i))_{i,j} : x_i\in B_X,\ \sum_{i=1}^n \lambda_i x_i=x\big\}.\]
    In the closure case, for $\varepsilon>0$, let
    \[
    A_{\varepsilon}=\big\{(r_{ij}(x_i))_{i,j} : x_i\in B_X,\ \Big\|\sum_{i=1}^n \lambda_i x_i-x\Big\|<\varepsilon\big\}.
    \]
    Let $c\in \Delta_N$, and choose $d\in D$ with $\|c-d\|_1<\delta$. In the exact case, $x\in V_d$, so there are $x_i\in S_i^d$ ($i=1,\dotsc,n$) such that $\sum_{i=1}^n\lambda_ix_i=x$. In the closure case, $x\in W_d$, and, for every $\varepsilon>0$, there are $x_i\in S_i^d$ ($i=1,\dotsc,n$) such that $\|\sum_{i=1}^n\lambda_i x_i-x\|<\varepsilon$. For the corresponding vector $a=(r_{ij}(x_i))_{i,j}$ we have 
    \[
    d\cdot a=1+\sum_{i=1}^n\re \big(g_i^d(x_i)-g_i^d(u_i)\big)>1-n\tfrac{1}{2n}=\tfrac12.
    \]
    Consequently, 
    \[
    c\cdot a\geq d\cdot a-\|c-d\|_1\|a\|_\infty>\tfrac12-\delta M>0.
    \]
    Lemma~\ref{lem:2.1} gives a vector whose coordinates are positive.

    In the exact case, this gives a representation $x=\sum_{i=1}^n\lambda_i x_i$ with $x_i\in U_i^0$ for every $i$. Hence $x\in \sum_{i=1}^n\lambda_i U_i$, and therefore
    \[W\subset \sum_{i=1}^n\lambda_i U_i.\]
    In the closure case, we get
    \[W\subset \overline{\sum_{i=1}^n\lambda_i U_i}^{\|\cdot\|}.\]
    This proves both converse implications.
\end{proof}

\begin{corollary}\label{cor:cor2.3}
    For every Banach space $X$ and every integer $m\geq 2$,
    \[
    \textup{P1}^{(m)}\iff \textup{P1}\qquad\text{and}\qquad \overline{\textup{P1}}^{(m)}\iff \overline{\textup{P1}}.
    \]
\end{corollary}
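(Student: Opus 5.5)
The plan is to show P1$^{(m)}\implies$ P1, since the reverse implication P1 $\implies$ P1$^{(m)}$ is trivial; the closure variant is handled the same way. The idea is to reduce an arbitrary finite convex combination to binary ones by induction on the number of summands, using Theorem~\ref{thm:thm2.1} to pass from slices to relatively weakly open sets. Because P1$^{(m)}$ for $m\geq 2$ trivially implies P1$^{(2)}$, it suffices to start from P1$^{(2)}$. By Theorem~\ref{thm:thm2.1}, this gives CWO$^{(2)}$: every convex combination $\lambda U+(1-\lambda)V$ of two non-empty relatively weakly open subsets $U,V\subset B_X$ is relatively weakly open.

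Now let $U_1,\dots,U_n$ be non-empty relatively weakly open sets with $n\geq 3$, and let $\lambda_i>0$ with $\sum\lambda_i=1$. Put $\mu=\sum_{i=1}^{n-1}\lambda_i$. Then
\[
\sum_{i=1}^n\lambda_iU_i=\mu\Big(\sum_{i=1}^{n-1}\tfrac{\lambda_i}{\mu}U_i\Big)+\lambda_nU_n .
\]
By the induction hypothesis, the inner set is a non-empty relatively weakly open subset of $B_X$; it lies in $B_X$ by convexity. Applying CWO$^{(2)}$ once more shows that the whole combination is relatively weakly open. Hence CWO holds, and in particular P1 holds, since slices are relatively weakly open.

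For the closure case, the plan uses $\overline{\mathrm{CWO}}^{(2)}$ from Theorem~\ref{thm:thm2.1}, and this is the step needing care. Since the inner combination $C'=\sum_{i<n}(\lambda_i/\mu)U_i$ need not be weakly open, the induction cannot be applied verbatim. Instead, take $x=\sum\lambda_iu_i$ and set $u'=\sum_{i<n}(\lambda_i/\mu)u_i\in C'$. By the induction hypothesis there is a relatively weakly open $W'$ with $u'\in W'\subset\overline{C'}$. Applying $\overline{\mathrm{CWO}}^{(2)}$ to $\mu W'+\lambda_nU_n$ at the point $x$ gives a relatively weakly open $W$ with
\[
x\in W\subset\overline{\mu W'+\lambda_nU_n}\subset\overline{\mu\overline{C'}+\lambda_nU_n}=\overline{\mu C'+\lambda_nU_n},
\]
where the last equality follows from continuity of addition and scaling in norm. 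This completes the induction, so $\overline{\mathrm{CWO}}$ holds, and therefore $\overline{\mathrm{P1}}$ holds.
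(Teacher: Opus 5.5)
Your proof is correct and follows the paper's route. Theorem~\ref{thm:thm2.1} turns $\mathrm{P1}^{(2)}$ into $\mathrm{CWO}^{(2)}$, induction on the number of summands then gives CWO and hence P1, and your closure case (replacing the inner combination $C'$ by the neighbourhood $W'\subset\overline{C'}$ before applying $\overline{\mathrm{CWO}}^{(2)}$) correctly supplies the details the paper omits as ``essentially the same''.
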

\begin{proof}
    By Theorem~\ref{thm:thm2.1}, $\textup{P1}^{(2)}$ implies $\textup{CWO}^{(2)}$. Convex combinations of two non-empty relatively weakly open subsets of $B_X$ are therefore relatively weakly open. Arguing by induction shows that $X$ has CWO (see also \cite[Lemma~4.1 (a)]{MR4034749}), and hence P1.
    For the closure version, the proof is essentially the same, and we omit it here.
\end{proof}

\begin{remark}
    In \cite[after Corollary~2.4]{MR4422399}, L\'opez-P\'erez and Medina asked whether weak stability of $B_X$ and the relative weak openness of every convex combination of slices of $B_X$ are equivalent for arbitrary Banach spaces. Theorem~\ref{thm:thm2.1} answers this question affirmatively.
\end{remark}

\begin{remark}
    The proof of Theorem~\ref{thm:thm2.1} uses only basic neighbourhoods determined by finitely many functionals. It therefore applies verbatim to a dual space endowed with the weak-star topology, once slices and relatively weakly open sets are replaced by weak-star slices and relatively weak-star open sets. Consequently, the weak-star analogues of Theorem~\ref{thm:thm2.1} and Corollary~\ref{cor:cor2.3} hold as well.
\end{remark}

\section{The binary versions of P3 and P2}

We first fix some notation. Let $2\leq n\in\mathbb N$. A norm $N$ on $\mathbb R^n$ is called \emph{absolute normalised} if 
\[N(a_1,\dotsc,a_n)=N(|a_1|,\dotsc,|a_n|)\qquad\text{and}\qquad N(e_i)=1,\]
for all $(a_1,\dotsc,a_n)\in\mathbb R^n$ and every standard unit vector $e_i$. For Banach spaces $X_1,\dotsc,X_n$, we write ${(\bigoplus_{i=1}^n X_i)}_N$ for their direct sum equipped with the norm $\|(x_1,\dotsc,x_n)\|_N=N(\|x_1\|,\dotsc,\|x_n\|)$. We also set $B_N^+=B_{(\mathbb R^n,N)}\cap [0,\infty)^n$.


\begin{proposition}\label{prop:P3^2,aga_mitte_P3}
    There is an infinite-dimensional real Banach space with $\textup{P3}^{(2)}$ but without $\textup{P3}$. 
\end{proposition}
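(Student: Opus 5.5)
We take $X={(\bigoplus_{i=1}^3 X_i)}_N$, where each $X_i$ is an infinite-dimensional space whose unit ball has the strong diameter two property and P1 (for instance $X_i=c_0$), and $N$ is a suitably chosen absolute normalised norm on $\mathbb R^3$. The guiding principle is the standard description of slices of absolute sums. A functional $x^*=(x_1^*,\dotsc,x_3^*)$ with $N^*(\|x_1^*\|,\dotsc,\|x_3^*\|)=1$ defines a slice $S(B_X,x^*,\alpha)$. Its points are the $(x_i)$ with $(\|x_i\|)_i\in B_N^+$ close to a norming point of $(\|x_i^*\|)_i$ in $B_N^+$, and with each $x_i/\|x_i\|$ in a slice of $B_{X_i}$ (when $\|x_i\|$ is not small). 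Since the $X_i$ have P1 and infinite dimension, the component slices can be pushed to $S_{X_i}$ freely. Hence whether a convex combination of slices of $B_X$ meets $S_X$ reduces to a finite-dimensional question in $B_N^+$. The question becomes: given slices $T_1,\dots,T_n$ of $B_N^+$, i.e.\ neighbourhoods of faces exposed by nonnegative functionals, does some $\sum\lambda_k t_k$ with $t_k\in T_k$ have $N$-norm $1$?

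The choice of $N$ must make the answer yes for $n=2$ and no for some $n=3$. I would take $B_N^+$ to be a polytope in $[0,1]^3$ whose maximal faces all meet pairwise, but which has three faces with no common point and no averaging that stays on the sphere. A natural candidate is the unit ball of the norm
\[
N(a)=\max\Big\{\|a\|_\infty,\ \tfrac{1}{2}\big(|a_1|+|a_2|+|a_3|\big)\Big\}\cdot\text{(adjusted)},
\]
or more concretely the norm whose positive unit sphere consists of three facets $F_1,F_2,F_3$, one near each coordinate vertex $e_i$. Each pair $F_i\cap F_j$ should contain an edge. The averaging fails for triples because points of $F_1,F_2,F_3$ near $e_1,e_2,e_3$ average to about $(1/3,1/3,1/3)$, which lies in the interior. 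For pairs, the midpoint of points in $F_i$ and $F_j$ can be taken on the common edge, where both points coincide. So the midpoint is itself on the sphere.

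The proof proceeds in four steps.
\begin{enumerate}[label=(\arabic*)]
\item Write down the slice decomposition lemma for ${(\bigoplus X_i)}_N$.
\item For two slices $S_1,S_2$, find $t\in$ (slice of $B_N^+$ for $S_1$) $\cap$ (slice for $S_2$) with $N(t)=1$. Then lift $t$ to $x_1\in S_1$, $x_2\in S_2$ having the same norm profile $t$. In each coordinate, choose $y_i\in S(B_{X_i},\cdot)\cap S(B_{X_i},\cdot)$-type points so that $\lambda x_1+(1-\lambda)x_2$ has component norms exactly $t_i$. For this I would use that in $X_i$ any convex combination of two slices meets $S_{X_i}$ (P3 of $X_i$), scaled by $t_i$.
\item Show that the required $t$ exists. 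Every face exposed by a nonnegative functional contains some facet edge, so any two exposed faces intersect, and so do their slices.
\item Exhibit three slices near the $e_i$-facets whose combination with weights $1/3$ has all norm profiles with $N<1$. Then conclude by the slice decomposition, since $\|\sum\lambda_k x_k\|_N\le N(\sum\lambda_k(\|x_{k,i}\|)_i)$.
\end{enumerate}

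The main obstacle is step (3): designing $N$ so that every pair of faces of $B_N^+$ exposed by positive functionals intersects. This must include degenerate functionals supported on one coordinate, which expose the face $\{a_i=1\}$. Meanwhile three of them must have empty intersection with no convex compensation. I would first try the hexagon-type norm $N(a)=\max\{\|a\|_\infty,(|a_1|+|a_2|+|a_3|)/2\}$. Its positive sphere consists of the faces $a_i=1$ (triangles) and $a_1+a_2+a_3=2$, and I would check pairwise intersections there. If the central facet causes a failure, I would perturb $N$ so that the facet is split and absorbed into the $a_i=1$ faces.
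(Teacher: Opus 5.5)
\paragraph{Verdict.} There is a genuine gap: the mechanism you propose for $\textup{P3}^{(2)}$ cannot coexist with the failure of P3, for any choice of $N$. Your framework of reducing to profiles in $B_N^+$ and lifting coordinatewise with P3 of $c_0$ is the one the paper uses, and your reduction paragraph has the right formulation, with different $t_k\in T_k$.

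\paragraph{Why steps (2)--(3) are self-defeating.} In steps (2)--(3) you instead require a \emph{common} profile $t$ in both slices of $B_N^+$. Since this must hold for arbitrarily thin slices, it forces any two faces of $B_N^+$ exposed by nonnegative functionals to intersect. For nonnegative functionals off a null set (off finitely many hyperplanes if $N$ is polyhedral), the exposed face is a single point. Pairwise intersection then forces all these functionals to expose the same point $v$. Upper semicontinuity of the exposed face then puts $v$ in \emph{every} nonnegatively exposed face. Lifting the common profile $v$ exactly as in your step (2), now applying P3 of $c_0$ to $n$ slices, gives full P3 for $Z$. So step (4) must fail, and no perturbation of $N$ rescues the strategy. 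Likewise, your claim that every nonnegatively exposed face contains an edge must fail in any working example.

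\paragraph{Your concrete candidate.} The norm $\max\{\|a\|_\infty,\tfrac12\|a\|_1\}$ is an instance of this. Every nonnegative functional attains its maximum on $B_N^+$ at some vertex $e_i+e_j$, and all of these lie on the facet $a_1+a_2+a_3=2$. For example, for the three coordinate slices, the profiles $(1,1,0)$, $(0,1,1)$, $(1,0,1)$ average to $(\tfrac23,\tfrac23,\tfrac23)$, which has norm $1$.

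\paragraph{The missing idea.} The two slices need not share a profile; it suffices that the segment between their profiles lies in the unit sphere of $N$.
\begin{itemize}
\item The paper takes $N(r,s,t)=\max\{|r|+|s|,|r|+|t|,|s|+|t|\}$. The nonzero vertices of $B_N^+$ are $e_1,e_2,e_3,(\tfrac12,\tfrac12,\tfrac12)$, and any two of them lie on a common facet.
\item For slices with functionals $(f_i)$ and $(g_i)$, one picks vertices $a,b$ norming $(\|f_i\|)$ and $(\|g_i\|)$ and sets $c=\lambda a+(1-\lambda)b$, so $N(c)=1$.
\item In each coordinate with $c_i>0$, one applies P3 of $c_0$ to the two-slice combination with weights $\lambda a_i/c_i$ and $(1-\lambda)b_i/c_i$.
\item P3 then fails via the faces exposed by $(1,0,0)$, $(0,1,0)$, $(0,0,1)$, which are the single points $e_1,e_2,e_3$. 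Their thin slices are pairwise disjoint, which is exactly why a common profile is unavailable. The average of the three slices lies in $\tfrac{2(1+2\alpha)}{3}B_Z$.
\end{itemize}
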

\begin{proof}
    Define an absolute normalised norm on $\mathbb R^3$ by 
    \[
    N(r,s,t)=\max\{|r|+|s|,|r|+|t|,|s|+|t|\}.
    \]
    Geometrically, the unit ball of $(\mathbb R^3, N)$ is a rhombic dodecahedron. In the first octant, its vertices are $e_1,e_2,e_3$, and $(\frac12,\frac12,\frac12)$. Every two of these vertices lie on a common facet, and thus the line segment joining them is contained in $S_{(\mathbb R^3,N)}$.
    
    Let $Z={(\bigoplus_{i=1}^3 c_0)}_N$. Consider two slices \[S\coloneqq S(B_Z,(f_1,f_2,f_3),\alpha)\qquad\text{and}\qquad T\coloneqq S(B_Z,(g_1,g_2,g_3),\beta),\]
    and $\lambda\in(0,1)$. 
    Choose vertices $a,b\in B_N^+$ such that 
    \[\sum_{i=1}^3a_i\|f_i\|=1\qquad \text{and}\qquad \sum_{i=1}^3b_i\|g_i\|=1.\]
    There are slices $S_i$ and $T_i$ of $B_{c_0}$ such that 
    \[
    (a_1 x_1,a_2x_2, a_3 x_3)\in S\qquad\text{and}\qquad(b_1y_1,b_2y_2,b_3y_3)\in T
    \]
    whenever $x_i\in S_i$ and $y_i\in T_i$ for $i=1,2,3$. If the coefficient or the corresponding functional is zero, then the corresponding slice can be chosen arbitrarily.

     Let $c=\lambda a+(1-\lambda) b$. Then $N(c)=1$. Write $c=(c_1,c_2,c_3)$. By \cite[Theorem~2.4]{MR3834668}, $c_0$ has P1 and hence P3. Thus, whenever $c_i>0$, we can choose $x_i\in S_i$ and $y_i\in T_i$ such that \[w_i\coloneqq\frac{\lambda a_i}{c_i}x_i+\frac{(1-\lambda)b_i}{c_i}y_i\in S_{c_0}.\] If $c_i=0$, then choose any $w_i\in S_{c_0}$. Then
    \[
    z\coloneqq (c_1w_1,c_2w_2,c_3w_3)\in \lambda S+(1-\lambda)T
    \]
    and $\|z\|=N(c)=1$. Thus $Z$ has $\textup{P3}^{(2)}$.
    
    To see that $Z$ fails P3, fix $f_1,f_2,f_3\in S_{c_0^*}$ and $\alpha\in(0,1/4)$. Consider three slices \[S_1\coloneqq S(B_Z,(f_1,0,0),\alpha),\qquad S_2\coloneqq S(B_Z,(0,f_2,0),\alpha),\]\[\text{and}\qquad S_3\coloneqq S(B_Z,(0,0,f_3),\alpha).\] 
    If $x=(x_1,x_2,x_3)\in S_i$, then $\|x_j\|<\alpha$ for $j\neq i$. Hence every $z=(z_1,z_2,z_3)\in (S_1+S_2+S_3)/3$ satisfies $\|z_j\|\leq (1+2\alpha)/3$ ($j=1,2,3$), and therefore $\|z\|\leq 2(1+2\alpha)/3<1$. Thus $Z$ fails P3.    
\end{proof}

\begin{remark}
    The space $Z$ in Proposition~\ref{prop:P3^2,aga_mitte_P3} does not have the SD2P because the three-slice convex combination used above is contained in $rB_Z$ for some $r<1$. Nevertheless, every convex combination of two slices of $B_Z$ contains two points at distance $2$. Indeed, we use the notation from the first part of the proof. If $c_i>0$, then \cite[Theorem~3.4]{MR3994868} gives two points \[w_i^1,w_i^2\in  \frac{\lambda a_i}{c_i}S_i+\frac{(1-\lambda) b_i}{c_i} T_i\] such that $\|w_i^1-w_i^2\|=2$. If $c_i=0$, then let $w_i^1=w_i^2\in B_{c_0}$. We then obtain, for $j=1,2$, 
     \[z^{j}\coloneqq (c_1w_1^{j},c_2w_2^{j},c_3w_3^{j})\in \lambda S+(1-\lambda)T,\]
    and \[\|z^1-z^2\|=N(2c_1,2c_2,2c_3)=2.\] 
    To the best of our knowledge, it was previously unknown whether convex combinations of two slices determine SD2P; our example shows that this is not the case.
\end{remark}

\begin{remark}
    The same space in Proposition~\ref{prop:P3^2,aga_mitte_P3} does not have P2$^{(2)}$. Let $e_1^*$ be the first coordinate functional on $c_0$ and put 
    \[S\coloneqq S(B_Z,(e_1^*,0,0),1/4)\qquad\text{and}\qquad T\coloneqq S(B_Z,(0,e_1^*,0),1/4).\] 
    If $z=(z_1,z_2,z_3)\in (S+T)/2$, then 
    \[\|z_1\|<5/8,\qquad \|z_2\|<5/8,\qquad \text{and}\qquad \|z_3\|<1/4.\]
    For all sufficiently large $n$, the point $z^{(n)}=(z_1,z_2,z_3+e_n/3)$ belongs to $B_Z$ because $\|z_3+e_n/3\|<3/8$ and $\|z_1\|,\|z_2\|<5/8$, while $\|z_1\|+\|z_2\|\leq 1$. Since $z^{(n)}\to z$ weakly and $\|z_3+e_n/3\|>1/4$, we have $z^{(n)}\notin(S+T)/2$, so $(S+T)/2$ has empty relative weak interior.
\end{remark}

A slight modification of the preceding example separates P2$^{(2)}$ and P2.

\begin{proposition}\label{prop:P2}
    There is an infinite-dimensional real Banach space with $\textup{P2}^{(2)}$ but without $\textup{P3}$.
\end{proposition}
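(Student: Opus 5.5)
The plan is to keep the construction of Proposition~\ref{prop:P3^2,aga_mitte_P3}, $Z=(\bigoplus_{i=1}^3 c_0)_N$, but to replace the rhombic dodecahedron norm by a ``tilted'' absolute normalised norm. With the new norm, the free room in the idle coordinate disappears, and that free room is exactly what destroyed $\textup{P2}^{(2)}$ in the remark above. Concretely, for a small $\varepsilon\in(0,1)$ I would take
\[
N(r,s,t)=\max\{|r|+|s|+\varepsilon|t|,\ |r|+\varepsilon|s|+|t|,\ \varepsilon|r|+|s|+|t|\}.
\]
This norm satisfies $N(e_i)=1$, and every segment $[e_i,e_j]$ lies in the unit sphere. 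Each such segment lies on a facet whose outer normal, for example $(1,1,\varepsilon)$, has all coordinates strictly positive. Moreover $N(\frac13,\frac13,\frac13)=(2+\varepsilon)/3<1$.

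\textbf{Failure of P3.} This part is copied verbatim from Proposition~\ref{prop:P3^2,aga_mitte_P3}. Take the three slices $S_i$ determined by functionals supported on the $i$th coordinate. Every $z\in(S_1+S_2+S_3)/3$ then has $\|z_j\|\le(1+2\alpha)/3$, so
\[
\|z\|\le(2+\varepsilon)(1+2\alpha)/3<1
\]
once $\alpha$ is small. Hence P3, and therefore P2, fails.

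\textbf{$\textup{P2}^{(2)}$.} Fix slices $S,T$ and $\lambda\in(0,1)$, and choose $a,b\in B_N^+$ and slices $S_i,T_i$ of $B_{c_0}$ as before. This again requires understanding the vertices of $B_N^+$; the vertex set now includes points such as $(1/(1+\varepsilon),1/(1+\varepsilon),1/(1+\varepsilon))\cdot$const. I would arrange $a$ and $b$ to lie on a common facet, which one can achieve by picking $a,b$ among maximisers lying on faces. Then $c=\lambda a+(1-\lambda)b$ lies on a facet with normal $\nu\in(0,\infty)^3$, so that $\nu\cdot c=1$ and $\nu\cdot v\le N(v)$ for all $v\ge0$.

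By Theorem~\ref{thm:thm2.1} and Corollary~\ref{cor:cor2.3}, $c_0$ has CWO, since it has P1 \cite[Theorem~2.4]{MR3834668}. Hence, for each $i$ with $c_i>0$, the set
\[
C_i=\frac{\lambda a_i}{c_i}S_i+\frac{(1-\lambda)b_i}{c_i}T_i
\]
is relatively weakly open in $B_{c_0}$. I would choose $w_i\in C_i\cap S_{c_0}$ together with norming functionals $\phi_i$ with $\phi_i(w_i)=1$. The candidate weakly open set in $B_Z$ is then
\[
W=\{y\in B_Z:\ y_i\in c_i\,C_i' \text{ roughly, and } \phi_i(y_i)>c_i-\eta\}.
\]

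\textbf{Main obstacle.} The key point is to show that every $y\in W$ satisfies $\|y_i\|\le c_i+O(\eta)$. The weak lower bounds give $\|y_i\|>c_i-\eta$. Combining this with $\nu\cdot(\|y_1\|,\|y_2\|,\|y_3\|)\le N(\|y_j\|)\le1=\nu\cdot c$ and with $\nu_i>0$ for every $i$ forces the upper bound. This is where the tilt by $\varepsilon$ is used; for the original norm the normal vanished in the idle coordinate. Once the norms are pinned, I would write $y_i=c_i'\,(y_i/c_i')$ with $c_i'=\|y_i\|$ close to $c_i$. I then need the normalised vectors $y_i/c_i'$ to lie in a slightly shrunk version of $C_i$, which should follow from relative weak openness of $C_i$ in $B_{c_0}$. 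Finally, the scalar discrepancy $c'$ versus $c$ must be absorbed into the slices $S,T$ using the slack in $\alpha$ and $\beta$.

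I expect this reconciliation of the scalars $c_i'$ with the fixed coefficients $\lambda a_i,(1-\lambda)b_i$ to be the delicate step. It includes the coordinates with $c_i=0$, where one must show that $y_i$ is forced to be small, again via $\nu_i>0$.
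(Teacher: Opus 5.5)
You have the paper's construction (the paper takes $\varepsilon=\frac12$), and your failure-of-P3 argument is the paper's. Your norm-pinning in the ``main obstacle'' paragraph is also exactly the paper's first step: $\Delta_i=\|y_i\|-c_i>-\delta$ together with $\nu\cdot\Delta\le 0$ gives $|\Delta_i|=O(\delta)$. Two small corrections:
\begin{itemize}
\item The non-zero vertices of $B_N^+$ are $e_1,e_2,e_3$ and $\frac{1}{2+\varepsilon}(1,1,1)$.
\item What lets you put $a,b$ on a common facet is that any two of these four vertices lie on one of $F_{12},F_{13},F_{23}$.
\end{itemize}

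\textbf{The gap.} Your proof of $\textup{P2}^{(2)}$ stops exactly where its content lies. You never show that a point $y$ of your neighbourhood equals $\lambda x+(1-\lambda)x'$ with $x\in S$ and $x'\in T$. This is not a matter of absorbing a scalar discrepancy into the slack of $\alpha,\beta$. When $a,b$ are vertices, the scheme your sketch suggests fails: rescaling coordinatewise by $\|y_i\|/c_i$ and treating the coordinates with $c_i=0$ separately does not work.

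Here is a counterexample. Take $S=S(B_Z,(f_1,0,0),\alpha)$ and $T=S(B_Z,(0,g_2,0),\beta)$, so $a=e_1$, $b=e_2$ and $c=(\lambda,1-\lambda,0)$. Every weak neighbourhood of $z^0=(\lambda w_1,(1-\lambda)w_2,0)$ contains points $y=((\lambda-\delta)w_1,(1-\lambda)w_2,\frac{\delta}{\varepsilon}e_n)\in B_Z$, with $\delta$ small and $n$ large. Put $p_i=\|x_i\|$ and $q_i=\|x'_i\|$, and suppose $p_2=q_1=0$.
\begin{itemize}
\item From $\|y_2\|\le(1-\lambda)q_2$ we get $q_2\ge 1$; together with $N(q)\ge q_2+q_3$ this forces $q_3=0$.
\item Hence $p_3\ge\delta/(\varepsilon\lambda)$, and $N(p)\ge p_1+p_3$ gives $p_1\le 1-\delta/(\varepsilon\lambda)$.
\item Then $\|y_1\|\le\lambda p_1\le\lambda-\delta/\varepsilon<\lambda-\delta=\|y_1\|$, a contradiction.
\end{itemize}
So mass must be moved across coordinates along the facet, and your sketch has no mechanism for this. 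Likewise, the issue with $c_i=0$ is not that $y_i$ is small but where to put it.

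\textbf{The missing idea.} The paper first spends part of the slack in $\alpha,\beta$ to replace $a,b$ by points $\tilde a,\tilde b$ in the relative interior of the common facet $F_{12}$, which has normal $\nu$. Then all $c_i>0$, and there is $\kappa>0$ such that $\tilde a+p,\tilde b+p\in B_N^+$ whenever $\|p\|_\infty<\kappa$ and $\nu\cdot p\le 0$.

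Next, $\Delta$ is split explicitly as $\lambda s+(1-\lambda)t$:
\begin{itemize}
\item for $i\in I=\{i:\Delta_i<0\}$, put $s_i=\tilde a_i\Delta_i/c_i$ and $t_i=\tilde b_i\Delta_i/c_i$;
\item for $i\notin I$, put $s_i=(L_a/L)\Delta_i$ and $t_i=(L_b/L)\Delta_i$, where $L_a=-\sum_{i\in I}\nu_is_i$, $L_b=-\sum_{i\in I}\nu_it_i$ and $L=\lambda L_a+(1-\lambda)L_b$.
\end{itemize}
This gives $\nu\cdot s\le 0$, $\nu\cdot t\le 0$, $|s_i|\le|\Delta_i|/\lambda$ and $|t_i|\le|\Delta_i|/(1-\lambda)$.

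Finally, write $y_i/\|y_i\|=\frac{\lambda\tilde a_i}{c_i}u_i+\frac{(1-\lambda)\tilde b_i}{c_i}v_i$ with $u_i\in S_i$ and $v_i\in T_i$. Set $x_i=(\tilde a_i+s_i)u_i$ for $i\in I$ and $x_i=\tilde a_iu_i+s_i\,y_i/\|y_i\|$ for $i\notin I$, and define $x'$ analogously. Then $\lambda x+(1-\lambda)x'=y$ and $\|x_i\|\le\tilde a_i+s_i$, so $x\in B_Z$. Also $x$ lies within $\eta$ of $(\tilde a_iu_i)_i$, so $x\in S$.

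Without this step, or an equivalent polyhedral argument producing such splittings with linear bounds, your proposal proves only the failure of P3, not $\textup{P2}^{(2)}$.
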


\begin{proof}
    Define an absolute normalised norm on $\mathbb R^3$ by
    \[
    N(r,s,t)=\max\{|r|+|s|+\frac12|t|,|r|+\frac12|s|+|t|,\frac12|r|+|s|+|t|\},
    \]
    and let $Z={(\bigoplus_{i=1}^3 c_0)}_N$. The non-zero vertices of $B_N^+$ are $e_1,e_2,e_3$ and $v=(\frac25,\frac25,\frac25)$.
    Let \[F_{12}\coloneqq\operatorname{conv}\{e_1,e_2,v\},\qquad F_{13}\coloneqq\operatorname{conv}\{e_1,e_3,v\},\]\[\text{and}\qquad F_{23}\coloneqq \operatorname{conv}\{e_2,e_3,v\}.\]

    We first prove that $Z$ has P2$^{(2)}$. Consider two slices \[S\coloneqq S(B_Z,(f_1,f_2,f_3),\alpha)\qquad\text{and}\qquad T\coloneqq S(B_Z,(g_1,g_2,g_3),\beta),\] and $\lambda\in(0,1)$. Choose vertices $a,b\in B_N^+$ such that \[\sum_{i=1}^3 a_i\|f_i\|=1\qquad\text{and}\qquad \sum_{i=1}^3 b_i\|g_i\|=1.\] Some facet among $F_{12},F_{13},F_{23}$ contains both $a$ and $b$. By symmetry, assume that it is $F_{12}$. Choose $\widetilde a$, $\widetilde b$ in the relative interior of $F_{12}$, close to $a$ and $b$, so that 
    \[\sum_{i=1}^3 \widetilde a_i\|f_i\|>1-\alpha\qquad \text{and}\qquad \sum_{i=1}^3 \widetilde b_i\|g_i\|>1-\beta.\]
    Choose $\eta>0$ and slices $S_i$ and $T_i$ of $B_{c_0}$ such that
    \[
    \sum_{i=1}^3\widetilde a_i f_i(u_i)>1-\alpha+\eta\qquad\text{and}\qquad
    \sum_{i=1}^3\widetilde b_i g_i(v_i)>1-\beta +\eta,
    \]
    whenever $u_i\in S_i$ and $v_i\in T_i$. 
    
    Let $c=\lambda \widetilde a+(1-\lambda)\widetilde b$. Then $c\in F_{12}$, so $N(c)=1$, and all coordinates of $c$ are positive. For $i=1,2,3$, let 
    \[W_i=\frac{\lambda \widetilde a_i}{c_i}S_i+\frac{(1-\lambda)\widetilde b_i}{c_i}T_i.\]
    By \cite[Theorem~2.4]{MR3834668}, $c_0$ has P1; hence each $W_i$ is a non-empty relatively weakly open subset of $B_{c_0}$ and therefore intersects the unit sphere. Let $w_i\in W_i\cap S_{c_0}$ for $i=1,2,3$. Then 
    \[z^0\coloneqq (c_1w_1,c_2w_2,c_3w_3)\in \lambda S+(1-\lambda)T\]
    and $\|z^0\|=N(c)=1$. 
    We next show that $z^0$ belongs to the relative weak interior of $\lambda S+(1-\lambda)T$.

    Let $d=(1,1,\frac12)$. Then $F_{12}=\{e\in B_N^+:d\cdot e=1\}$. Pick $\kappa>0$ such that $\tilde a+p\in B_N^+$ and $\tilde b+p\in B_N^+$ whenever $\|p\|_\infty<\kappa$ and $d\cdot p\leq 0$. Let $\mu=\min\{\lambda,1-\lambda\}$, and choose $0<\varrho<\mu\min\{\kappa,2\eta/5\}$.
    
    We claim that $z^0$ has a relatively weakly open neighbourhood $U$ in $B_Z$ such that every $z=(z_1,z_2,z_3)\in U$ satisfies
    \[
    \|z_i\|>0,\qquad z_i/\|z_i\|\in W_i,\qquad\text{and}\qquad \big|\|z_i\|-c_i\big|<\varrho.\]

Indeed, choose $h_i\in S_{c_0^*}$ with $h_i(w_i)=1$ and basic neighbourhoods 
\[
V_i=\{u\in B_{c_0} : |h(u-w_i)|<\gamma\  \text{for every $h\in F_i$}\}\subset W_i,
\]
where $\gamma>0$ and $F_i\subset B_{c_0^*}$ is finite. For sufficiently small $\delta>0$, let 
$U$ consist of points $z=(z_1,z_2,z_3)\in B_Z$ satisfying, for every $i=1,2,3$, 
\[
h_i(z_i)>c_i-\delta\qquad\text{and}\qquad |h(z_i-c_iw_i)|<\delta\qquad (h\in F_i).
\]
Fix $z=(z_1,z_2,z_3)\in U$. Denote by $\Delta_i=\|z_i\|-c_i$ ($i=1,2,3$) and $\Delta=(\Delta_1,\Delta_2,\Delta_3)$. Then $\Delta_i>-\delta$. Since $d\cdot \Delta\leq 0$, it implies that $|\Delta_i|< 4\delta$ for every $i$. Thus, when $\delta$ is small, $\|z_i\|>0$, $|\Delta_i|<\varrho$, and, for $h\in F_i$,
\[
|h(z_i/\|z_i\|-w_i)|\leq \frac{|h(z_i-c_iw_i)|+\big|c_i-\|z_i\|\big|}{\|z_i\|}<\frac{\delta+|\Delta_i|}{c_i-\delta}<\gamma.
\]

We now show that $z\in \lambda S+(1-\lambda)T$. Since $z_i/\|z_i\|\in W_i$, we have 
$u_i\in S_i$ and $v_i\in T_i$ so that
\[
\frac{z_i}{\|z_i\|}=
\frac{\lambda\widetilde a_i}{c_i}u_i
+\frac{(1-\lambda)\widetilde b_i}{c_i}v_i
\qquad(i=1,2,3).
\]
Consequently,
\[z_i=\lambda\widetilde a_iu_i+(1-\lambda)\widetilde b_iv_i+\Delta_iz_i/\|z_i\|.\]

Let $I=\{i: \Delta_i<0\}$. If $I=\varnothing$, then $\Delta_i\geq 0$ for every $i$. Since
$d_i>0$ and $d\cdot\Delta\leq 0$, it follows that $\Delta=0$.
Hence 
\[
z_i=\lambda\widetilde a_i u_i+(1-\lambda)\widetilde b_i v_i.
\]
Thus, setting $x_i=\widetilde a_i u_i$ and $y_i=\widetilde b_i v_i$ we obtain $z=\lambda x+(1-\lambda)y$, with $x\in S$ and $y\in T$. We may therefore assume that $I\neq\varnothing$. 

For $i\in I$, let 
\[
s_i=\frac{\widetilde a_i}{c_i}\Delta_i\qquad\text{and}\qquad t_i=\frac{\widetilde b_i}{c_i}\Delta_i.
\]
Let
\[
L_a=-\sum_{i\in I}d_is_i,\qquad L_b=-\sum_{i\in I}d_it_i,\qquad L=\lambda L_a+(1-\lambda)L_b.
\]
Then $L=-\sum_{i\in I}d_i\Delta_i>0$.
For $i\notin I$, define
\[
s_i=\frac{L_a}{L}\Delta_i\qquad\text{and}\qquad t_i=\frac{L_b}{L}\Delta_i.
\]
In either case, $\lambda s_i+(1-\lambda)t_i=\Delta_i$ for every $i$. Let
\[
P=\sum_{i\notin I}d_i\Delta_i.
\]
Then $P\leq L$ because $d\cdot\Delta\leq0$, and hence
\[
d\cdot s=-L_a+\frac{L_a}{L}P\leq 0\qquad\text{and}\qquad d\cdot t=-L_b+\frac{L_b}{L}P\leq 0.
\]
From $c_i=\lambda\widetilde a_i+(1-\lambda)\widetilde b_i$ and $ L=\lambda L_a+(1-\lambda)L_b$, we also have 
\[
\frac{\widetilde a_i}{c_i}\leq \frac{1}{\lambda},\qquad \frac{\widetilde b_i}{c_i}\leq \frac{1}{1-\lambda},\qquad \frac{L_a}{L}\leq\frac1\lambda, \qquad \frac{L_b}{L}\leq\frac1{1-\lambda}, 
\]
which give 
\[|s_i|\leq \frac{|\Delta_i|}{\lambda}<\frac{\varrho}{\mu}<\kappa\qquad\text{and}\qquad  |t_i|\leq \frac{|\Delta_i|}{1-\lambda}<\frac{\varrho}{\mu}<\kappa.\]
By the choice of $\kappa$, it follows that $\widetilde a+s, \widetilde b+t\in B_N^+$. 

Define $x=(x_1,x_2,x_3)$ and $y=(y_1,y_2,y_3)$ by
\[
x_i=
\begin{cases}
(\widetilde a_i+s_i)u_i,&i\in I,\\
\widetilde a_i u_i+s_i z_i/\|z_i\|,& i\notin I,
\end{cases}
\qquad\text{and}\qquad 
y_i=
\begin{cases}
(\widetilde b_i+t_i)v_i,& i\in I,\\
\widetilde b_i v_i+t_i z_i/\|z_i\|,& i\notin I.
\end{cases}
\]
Then $\lambda x+(1-\lambda)y=z$ and $x,y\in B_Z$ because $\|x_i\|\leq\widetilde a_i+s_i$ and $\|y_i\|\leq\widetilde b_i+t_i$ for every $i$. We also have, by the choice of $\varrho$,  
\[\big\|x-(\widetilde a_1u_1, \widetilde a_2u_2, \widetilde a_3u_3)\big\|\leq N(|s_1|,|s_2|,|s_3|)<\frac{5\varrho}{2\mu}<\eta\]
and 
\[
\big\|y-(\widetilde b_1v_1, \widetilde b_2v_2, \widetilde b_3v_3)\big\|\leq N(|t_1|,|t_2|,|t_3|)<\frac{5\varrho}{2\mu}<\eta.
\]
By the choice of $\eta$, this implies that $x\in S$ and $y\in T$. Thus
\[
U\subset\lambda S+(1-\lambda)T,
\]
and $Z$ has $\textup{P2}^{(2)}$.

    It remains to show that $Z$ does not have P3.
    Fix $f_1,f_2,f_3\in S_{c_0^*}$ and $\alpha\in (0,1/10)$. Consider the slices $S_1=S(B_Z,(f_1,0,0),\alpha)$, $S_2=S(B_Z,(0,f_2,0),\alpha)$, and $S_3=S(B_Z,(0,0,f_3),\alpha)$. If $x=(x_1,x_2,x_3)\in S_i$, then $\|x_j\|<\alpha$ whenever $j\neq i$. Hence every $z=(z_1,z_2,z_3)\in (S_1+S_2+S_3)/3$ satisfies $\|z_j\|\leq \frac{1+2\alpha}{3}$ $(j=1,2,3$), and therefore $\|z\|_N\leq \frac{5(1+2\alpha)}{6}<1$. Thus $(S_1+S_2+S_3)/3$ does not intersect $S_Z$. So $Z$ fails P3. 
\end{proof}

\section{Strict separation of P1 and \texorpdfstring{$\overline{\text{P1}}$}{P1}}
For $n\in\mathbb N$, let $X_n$ be the two-dimensional Banach space $\mathbb R^2$ with the norm
\[
{\|(s,t)\|}_n=\max\Big\{|t|,\frac{n|s|+|t|}{n+1}\Big\}.\tag{1\label{eq:1}}
\]
Then $B_{X_n}=\operatorname{conv}([-1,1]^2\cup\{\pm(1+\frac1n,0)\})$.

Define \[X=\{x=(x_n) : x_n\in\mathbb R^2,\ x_\infty\coloneqq \lim_n x_n\ \text{exists in $\mathbb R^2$}\}\] and $\|x\|=\sup_n {\|x_n\|}_n$. For every $u\in\mathbb R^2$, 
\[\frac{n}{n+1}\|u\|_\infty \leq \|u\|_n\leq \|u\|_\infty\qquad\text{and}\qquad \|u\|_n\to\|u\|_\infty.\]
Therefore $X$ is isomorphic to $c(\mathbb R^2)$, and hence to $c_0$. 
We shall use the following description of the dual $X^*$. Every $f\in X^*$ has a unique representation \[f(x)=f_\infty(x_\infty)+\sum_{n=1}^\infty f_n(x_n),\] where $f_\infty\in (\ell_\infty^2)^*=\ell_1^2$ and $f_n\in X_n^*$, $\sum_{n=1}^\infty\|f_n\|<\infty$, and $\|f\|=\|f_\infty\|_1+\sum_{n=1}^\infty\|f_n\|$.
Thus $X^*=(\ell_1^2\oplus\bigoplus_{n=1}^\infty X_n^*)_{\ell_1}$.

The following result shows that $\overline{\textup{P1}}$ is strictly weaker than $P1$ even within the class of Banach spaces covered by \cite[Theorem~2.7]{MR4116184}. This also gives a negative answer, for the precise $\ell_1$-type dual structure considered there, to the question posed in \cite[Introduction]{MR4116184}.

\begin{proposition}\label{prop:4.1}
    The space $X$ has $\overline{\textup{P1}}$ but not \textup{P1}.
\end{proposition}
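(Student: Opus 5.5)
The argument has two parts: the failure of P1 is witnessed by an explicit mean of two opposite slices, and $\overline{\mathrm{P1}}$ is obtained from the $\ell_1$-type structure of $X^*$ together with Corollary~\ref{cor:cor2.3}.

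\emph{Failure of P1.} By the introduction, the mean of two opposite slices should witness the failure. Let $f\in X^*$ be the limit functional $f(x)=s_\infty$, the first coordinate of $x_\infty$, so $f_\infty=(1,0)$. Then $\|f\|=1$, because $\|x\|\ge \|x_\infty\|_\infty$. Put $S=S(B_X,f,\alpha)$ and $C=\frac12 S+\frac12(-S)$, and take $u=0\in C$. The idea is that points of $S$ must have $s_\infty>1-\alpha$ while $\|x_n\|_n\le1$ for all $n$. Elements of $C$ are $\frac12(x-y)$ with $x,y\in S$.

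I would look for a weakly null net $z^\beta\in B_X$, supported on coordinates $n$ with $n\to\infty$ along the net, such that $z^\beta\notin C$. A natural candidate is $z^\beta=(1+\frac1n,0)e_n$, i.e.\ the vertex of $B_{X_n}$ placed in coordinate $n$. It is weakly null, since the $\ell_1$-sum description of $X^*$ gives $g(z)=g_n(z_n)\to0$.

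To show $z\notin C$, suppose $z=\frac12(x-y)$. At coordinate $n$ we need $x_n-y_n=(2+\frac2n,0)$ with $\|x_n\|_n,\|y_n\|_n\le1$. This forces $x_n=(1+\frac1n,0)=-y_n$, since that vertex is an extreme point of $B_{X_n}$. Elsewhere $x_k=y_k$. The tension is that $x_n$ has zero second coordinate. Hence $x_{n+1},x_{n+2},\dots$, which converge to $x_\infty$ with $s_\infty>1-\alpha$, have no coordinate-wise obstruction, and I must extract the obstruction differently. Perhaps a better $z$ places a small vertical perturbation in coordinate $n$, or uses the near-vertex points with $t\neq0$. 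I expect this calibration, namely choosing $z^\beta$ so that the extreme-point rigidity of the $X_n$ conflicts with the constraint from the slice at the limit, to be the main obstacle. I would first try $z^\beta$ equal to $(1+\frac1n,0)$ on a block of coordinates $k\ge n$. In that case $x_k=(1+\frac1k,0)=-y_k$ for all $k\ge n$, so $x_\infty=-y_\infty=(1,0)$. The slice condition on $-S$ then requires $y_\infty$ to have first coordinate $<-1+\alpha$, which is satisfied, so this candidate does not yet work either. I would then adjust by choosing instead a slice functional with a nonzero $t$-component, e.g.\ $f_\infty=(\frac12,\frac12)$, so that the forced values at the vertices conflict with the slice inequality in the limit.

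\emph{$\overline{\mathrm{P1}}$.} By Corollary~\ref{cor:cor2.3} it suffices to treat convex combinations $\lambda S+(1-\lambda)T$ of two slices and a point $u=\lambda x+(1-\lambda)y$ in it. Given $f,g$, I would truncate: fix $n_0$ with $\sum_{n>n_0}(\|f_n\|+\|g_n\|)$ small. On the finitely many coordinates $n\le n_0$ I would use finite-dimensional compactness. On the tail and the limit, the space behaves like $c(\ell_\infty^2)$ up to a factor $\frac{n}{n+1}$, which tends to $1$, and there the P1-type arguments for $c_0$ \cite[Theorem~2.4]{MR3834668} apply with only an arbitrarily small loss. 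This loss is exactly why only the norm closure of $C$ is reached. The neighbourhood $W$ of $u$ will control finitely many head coordinates and the tail through the limit functional. For $w\in W$, I will produce $x',y'$ in the slices with $\|\lambda x'+(1-\lambda)y'-w\|<\varepsilon$ by adjusting the head coordinates and repairing the tail by a small rescaling. Alternatively, I would invoke \cite[Theorem~2.7]{MR4116184}, which, per the text, covers spaces with this $\ell_1$-type dual, after verifying its hypotheses for $X^*=(\ell_1^2\oplus\bigoplus X_n^*)_{\ell_1}$.
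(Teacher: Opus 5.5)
Your argument for the failure of P1 never produces a witness, and the configuration you commit to cannot produce one. For $f(x)=(x_\infty)_1$ and $S=S(B_X,f,\alpha)$ one has
\[
\tfrac12 S+\tfrac12(-S)=\{w\in B_X : |(w_\infty)_1|<\alpha/2\},
\]
and this set is relatively weakly open. The inclusion $\subset$ follows from $\|x_\infty\|_\infty\le \|x\|$. For the reverse inclusion, write $w_n=(a_n,b_n)$ and put $v_n=(s_n,0)$ with $s_n=\min\{1,\,1+\frac1n-|a_n|-\frac{|b_n|}{n}\}\ge 0$. Then $\|w_n\pm v_n\|_n\le 1$ and $v_\infty=(1-|(w_\infty)_1|,0)$. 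Moreover $w=\frac12(w+v)+\frac12(w-v)$ with $w+v\in S$ and $w-v\in -S$. So no choice of base point or net can work with these slices, and $u=0$ in particular is a dead end. The block points you try are not even weakly convergent in $X$. Your dismissal of them also contains a sign slip: with $z=\frac12(x-y)$ and $x,y\in S$, the forced value $y_\infty=(-1,0)$ violates $f(y)>1-\alpha$.

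The missing idea is this. The extreme points $(1+\frac1n,0)$ of $B_{X_n}$ converge to $(1,0)$, which is not extreme in $B_{\ell_\infty^2}$, since it is the midpoint of $(1,1)$ and $(1,-1)$. The functional must separate these two points, and the base point must be the constant sequence $(1,0)$. The paper takes $f(x)=(x_\infty)_2$, $S=\{f>0\}$, and $x$ the constant sequence $(1,0)$. This $x$ is the midpoint of the constant sequences $(1,1)\in S$ and $(1,-1)\in -S$. Define $x^m$ to equal $(1,0)$ for $n<m$ and $(1+\frac1n,0)$ for $n\ge m$; then $\|x^m-x\|=\frac1{m+1}$. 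By extremality, every decomposition $x^m=\frac12(y+z)$ in $B_X$ has $y_n=z_n$ for $n\ge m$. Hence $y_\infty=z_\infty=(1,0)$ and $f(y)=f(z)=0$, so $x^m\notin C$. Your suggestion $f_\infty=(\frac12,\frac12)$ can also be made to work at this base point, e.g.\ with $\alpha\in(1,\frac32]$. For $\overline{\textup{P1}}$, your truncation sketch is not an argument. The claim that the $c_0$ method handles the tail ``with arbitrarily small loss'' is exactly the delicate point, since this tail is what destroys P1. Your fallback is the paper's route, but you must check its hypothesis. Every summand of $X^*=(\ell_1^2\oplus\bigoplus_{n} X_n^*)_{\ell_1}$ is two-dimensional and hence has property (co) by \cite[Proposition~3.7]{MR4034749}. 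Then \cite[Theorem~2.7]{MR4116184} gives $\overline{\textup{P1}}$ directly, with no need for Corollary~\ref{cor:cor2.3}.
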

    
\begin{proof}
Every summand of $X^*$ is a two-dimensional real Banach space and therefore has property $\textup{(co)}$ by \cite[Proposition~3.7]{MR4034749}. It follows from \cite[Theorem~2.7]{MR4116184} that $X$ has $\overline{\textup{P1}}$.

We show that $X$ fails P1. Define $f\in X^*$ by $f(x)=(x_\infty)_2$, the second coordinate of $x_\infty$. Then $\|f\|=1$. Consider the opposite slices $S=S(B_X,f,1)=\{x\in B_X : f(x)>0\}$ and $-S=S(B_X,-f,1)=\{x\in B_X : f(x)<0\}$, and let $C=\frac12S+\frac12(-S)$. Let $x,y,z\in B_X$ be the constant sequences with respective values $(1,0)$, $(1,1)$, and $(1,-1)$. Then $y\in S$ and $z\in -S$, and $x=\frac12 y+\frac12z\in C$. 

For $m\in\mathbb N$, define $x^m=(x^m_n)\in X$ by \[x^m_n=\begin{cases}(1,0),& n< m,\\(1+1/n,0),& n\geq m.\end{cases}\] 
Then $x^m\in B_X$ and 
    \[
    \|x^m-x\|=\sup_{n\geq m}\|(1/n,0)\|_n=\frac1{m+1}\to 0.
    \]
    Suppose that $x^m=(y+z)/2$ for some $y=(y_n), z=(z_n)\in B_X$. The point $(1+1/n,0)$ is an extreme point of $B_{X_n}$. Hence $y_n=z_n=(1+1/n,0)$ for all $n\geq m$. And passing to the limit gives $y_\infty=z_\infty=(1,0)$, so $f(y)=f(z)=0$. Thus $x^m\notin C$. 
    
    If $C$ were relatively weakly open in $B_X$, then, since $x\in C$ and $x^m\to x$ in norm (and hence weakly), we would have $x^m\in C$ for all sufficiently large $m$, which is a contradiction. Thus, $X$ does not have P1.
\end{proof}

\begin{remark}
    Proposition~\ref{prop:4.1} applies inside the precise $\ell_1$-type dual framework of \cite[Theorem~2.7]{MR4116184}, but still separates P1 and $\overline{\textup{P1}}$ (see the discussion on \cite[p.~2]{MR4116184}). It also separates, for general Banach spaces, conditions (ii) and (v) in \cite[Corollary~2.4]{MR4422399}, where (ii) is P1 and (v) is $\overline{\textup{P1}}$. By Theorem~\ref{thm:thm2.1}, the example in Proposition~\ref{prop:4.1} also separates CWO and $\overline{\textup{CWO}}$.
\end{remark}

\bibliographystyle{amsplain}
\bibliography{references}
\end{document}